\theoremstyle{plain}
\newtheorem{thm}{Theorem}
\newtheorem{prop}[thm]{Proposition}
\newtheorem{cor}[thm]{Corollary}
\theoremstyle{definition}
\theoremstyle{remark}
\newcommand{\Cay}{\Gamma}
\begin{document}

\title{Primitivity of group rings of non-elementary torsion-free hyperbolic groups}
\author{Brent B. Solie}
\address{Department of Mathematics, Embry-Riddle Aeronautical University, 3700 Willow Creed Road, Prescott, AZ 86301}
\ead{solieb@erau.edu}
\date{\today}
\begin{keyword}primitive group rings \sep hyperbolic groups

\MSC[2010] 16S34 \sep 20C07 \sep 20F67 \end{keyword}

\begin{abstract}
  We use a recent result of Alexander and Nishinaka to show that if $G$ is a non-elementary torsion-free hyperbolic group and $R$ is a countable domain, then the group ring $RG$ is primitive. This implies that the group ring $KG$ of any non-elementary torsion-free hyperbolic group $G$ over a field $K$ is primitive.
\end{abstract}

\maketitle

\section{Introduction}

In \cite{Alexander2017}, Alexander and Nishinaka use the following Property (*) of a group $G$ to establish the primitivity of a broad class of group rings.
(Recall that a ring $R$ is \emph{(right) primitive} if it contains a faithful irreducible (right) $R$-module.)\\

\begin{minipage}{0.9\linewidth}
  (*) For each subset $M$ of $G$ consisting of a finite number of elements not equal to 1, and for any positive integer $m \geq 2$, there exist distinct $a,b,c \in G$ so that if $(x_1^{-1}g_1 x_1) (x_2^{-1}g_2 x_2) \cdots (x_m^{-1}g_m x_m) = 1$, where $g_i \in M$ and $x_i \in \{a, b, c\}$ for all $i=1, \dots, m$, then $x_i = x_{i+1}$ for some $i$.
\end{minipage}

In particular, Alexander and Nishinaka give the following broad criterion for primitivity:

\begin{thm}[{\cite[Theorem 1.1]{Alexander2017}}]
  \label{Alexander-Nishinaka}
  Let $G$ be a group which has a non-Abelian free subgroup whose cardinality is the same as that of $G$, and suppose that $G$ satisfies Property (*). Then, if $R$ is a domain with $|R| \leq |G|$, the group ring $RG$ of $G$ over $R$ is primitive. In particular, the group algebra $KG$ is primitive for any field $K$.
\end{thm}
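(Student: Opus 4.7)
My plan is to produce a faithful irreducible right $RG$-module of the form $V=RG/\mathfrak{M}$, where $\mathfrak{M}$ is a maximal right ideal of $RG$ containing no nonzero two-sided ideal; existence of such an $\mathfrak{M}$ is equivalent to right primitivity of $RG$.

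The cardinality hypotheses give $|RG|=|G|=:\kappa$, so I can enumerate the nonzero elements of $RG$ as $(\alpha_\mu)_{\mu<\kappa}$ and the ordered pairs of distinct elements of $RG$ as $(\beta_\mu,\gamma_\mu)_{\mu<\kappa}$. I would then construct, by transfinite induction, a chain of proper right ideals $0=I_0\subseteq I_1\subseteq\cdots$, alternating two tasks indexed by $\mu$: at ``maximality'' stages, adjoin a generator that either forces $\beta_\mu\equiv\gamma_\mu\pmod{\mathfrak{M}}$ or forces $\beta_\mu-\gamma_\mu$ together with $I_\mu$ to generate $RG$ on the right; at ``faithfulness'' stages, adjoin a generator designed so that the two-sided ideal of $RG$ generated by $\alpha_\mu$ is \emph{not} absorbed into the final right ideal. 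The non-abelian free subgroup of cardinality $|G|$ supplies, by standard coset arguments, enough candidate generators at every stage; at each successor stage I apply Property~(*) to a finite subset $M\subseteq G\setminus\{1\}$ built from the support of the current test element together with the triples $(a_\nu,b_\nu,c_\nu)$ recorded at finitely many earlier stages, and use the output $(a_\mu,b_\mu,c_\mu)$ to shape the new generator as a combination of conjugates $x^{-1}gx$ with $x\in\{a_\mu,b_\mu,c_\mu\}$.

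The crucial step, and the main technical obstacle, is verifying $1\notin I_{\mu+1}$ at every stage. A hypothetical relation $1\in I_{\mu+1}$ would, after expanding the generators, using that $R$ is a domain, and collecting coefficients of monomials in $G$, reduce to an identity in $G$ of the form $(x_1^{-1}g_1x_1)\cdots(x_m^{-1}g_mx_m)=1$ with $g_i\in M$ and each $x_i$ drawn from one of the triples produced at a stage indexed below $\mu+1$. Property~(*), invoked with that $M$ and that $m$, then forces consecutive $x_i$'s to coincide in a pattern the encoding of the generators is arranged to forbid, yielding a contradiction. Designing the encoding so that this finite-memory reduction is actually available--i.e., so that only finitely many prior triples can be entangled in any would-be relation--is what I expect to be delicate. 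Granting it, $\mathfrak{M}=\bigcup_{\mu<\kappa}I_\mu$ is a maximal right ideal containing no nonzero two-sided ideal, so $RG/\mathfrak{M}$ is a faithful simple right $RG$-module. The final claim for arbitrary fields $K$ follows by applying the result to the prime subfield $K_0\subseteq K$ (whose cardinality is at most $\aleph_0\leq|G|$) and then lifting primitivity of $K_0G$ to $KG$ by a standard scalar-extension argument.
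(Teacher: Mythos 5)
This statement is quoted verbatim from \cite{Alexander2017} and is not proved anywhere in the present paper, which uses it strictly as a black box; so there is no in-paper argument to compare yours against, and what can be assessed is whether your sketch would actually establish the cited result. It would not, for two concrete reasons. First, and most seriously, your architecture interleaves ``maximality'' stages, at which you adjoin essentially arbitrary elements of $RG$ (namely $\beta_\mu-\gamma_\mu$ or generators forcing $\beta_\mu\equiv\gamma_\mu$) into the growing right ideal. Once such uncontrolled generators are present, the claimed reduction of a hypothetical relation $1\in I_{\mu+1}$ to a single group identity $(x_1^{-1}g_1x_1)\cdots(x_m^{-1}g_mx_m)=1$ amenable to Property~(*) simply does not go through: expanding $1=\sum_j \gamma_j\delta_j$ with arbitrary $\delta_j\in RG$ and with some $\gamma_j$ not of the prescribed conjugate-product shape does not collapse, after cancelling coefficients in the domain $R$, to an identity of the required form. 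This is precisely the step you flag as ``delicate'' and then grant, but it is the entire content of the theorem. The standard (Formanek-style) repair is to drop the maximality stages altogether: one builds only a proper right ideal $\rho$ satisfying $\rho+A=RG$ for every nonzero two-sided ideal $A$ (e.g.\ by adjoining, for each nonzero $\alpha\in RG$, a generator $1+\alpha f_\alpha$ with $f_\alpha$ manufactured from Property~(*) and from ``fresh'' elements of the free subgroup --- this is where $|R|\leq|G|$ and the cardinality of the free subgroup are used), verifies $1\notin\rho$ by a finitary argument in which \emph{every} generator is of the controlled shape, and then obtains the maximal right ideal by Zorn's lemma; maximality is free and need not be forced by hand.

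Second, your derivation of the ``any field $K$'' clause is wrong in direction. You propose to prove primitivity of $K_0G$ for the prime subfield $K_0$ and then ``lift'' to $KG$ by a ``standard scalar-extension argument.'' There is no such standard argument: if $V$ is a faithful irreducible $K_0G$-module, $V\otimes_{K_0}K$ is faithful but need not be irreducible, and its irreducible quotients need not be faithful; primitivity is not known to ascend along arbitrary field extensions by formal base change. (The easy transfer result goes the other way, from a larger coefficient field down to a subfield.) The case $|K|\leq|G|$ is of course immediate from the main clause, but the case $|K|>|G|$ requires a genuine additional argument specific to the construction, not a one-line appeal to scalar extension. For the purposes of the present paper none of this matters --- here $G$ is finitely generated, hence countable, and the theorem is only ever invoked for countable domains --- but as a proof of the quoted theorem your sketch has these two genuine gaps.
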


Our goal at present is to show that the hypotheses of Theorem 1 hold if $G$ is a non-elementary torsion-free hyperbolic group, thus implying the primitivity of a fairly broad class of group rings.

\section{Hyperbolic groups}

Let $G$ be a group with finite generating set $X$.
Recall that the \emph{Cayley graph} $\Cay_X(G)$ of $G$ with respect to $X$ is an $X$-digraph with vertex set $G$ and an $x$-labelled edge directed from $g$ to $gx$ for all $g \in G$ and $x \in X$.
We may equip $\Cay_X(G)$ with the word metric by assigning each edge a length of one; as a result, $\Cay_X(G)$ becomes a geodesic metric space.

We say that $\Cay_X(G)$ satisfies the \emph{$\delta$-thin triangles condition} for a real constant $\delta >0$ if every side of a geodesic triangle is contained in the $\delta$-neighborhood of the union of the two remaining sides.
If a group $G$ admits a finite generating set $X$ and real constant $\delta > 0$ for which $\Cay_X(G)$ satisfies the $\delta$-thin triangles condition, we say that $G$ is \emph{$\delta$-hyperbolic}, or simply \emph{hyperbolic}.
Hyperbolicity does not depend on choice of finite generating set, although the constant of hyperbolicity may vary.

It is easy to see that finite groups and free groups are hyperbolic.
Some more interesting examples of hyperbolic groups include one-relator groups with torsion, certain small cancellation groups, and fundamental groups of closed surfaces with negative Euler characteristic.
On the other hand, free Abelian groups of rank at least two and Baumslag-Solitar groups are standard examples of non-hyperbolic groups.

The intrinsic geometry of hyperbolic groups induces a wide range of algebraic, geometric, and algorithmic properties.
Hyperbolic groups are necessarily finitely presented, have a solvable word problem, have linear isoperimetric inequality, and satisfy the Tits alternative.
Furthermore, hyperbolicity is a common phenomenon among groups: standard statistical models in group theory show that a randomly chosen finitely presented group is overwhelmingly likely to be infinite, torsion-free, and hyperbolic \cite{Olshanskii1992}.
Hyperbolic groups and their generalizations are the subject of a tremendous amount of modern literature in group theory, and so we refer the interested reader to \cite{Alonso1991, Ghys1990, Gromov1987} for a more thorough introduction.

We say that a hyperbolic group is \emph{elementary} if it is finite or virtually infinite cyclic; in either case, primitivity of a group ring over such a group has already been characterized.
Fundamental to these characterizations is the \emph{FC center} of a group $G$, the set of elements $\Delta(G) = \{g \in G \mid [G:C_G(g)] < \infty\}$.
Similarly, the \emph{FC+ center} of $G$ is the subset $\Delta^+(G)$ of torsion elements of $\Delta(G)$.

It is well-known that if $KG$ is primitive, then $\Delta^+(G) = 1$ \cite[Lemma 9.1.1]{Passman1977}.
Consequently, if $G$ is finite, then $KG$ is not primitive for any field $K$ unless $G=1$.
Alternately, if $G$ is virtually infinite cyclic, it is necessarily polycyclic-by-finite and therefore covered by the classification collectively due to Domanov \cite{Domanov1978}, Farkas-Passman \cite{Farkas1978}, and Roseblade \cite{Roseblade1978}: for any field $K$ and polycyclic-by-finite $G$, the group ring $KG$ is primitive if and only if $\Delta(G)=1$ and $K$ is not absolute.

We now restrict our attention to non-elementary torsion-free hyperbolic groups.
We begin with a theorem of Gromov.

\begin{thm}[{\cite[Theorem 5.3.E]{Gromov1987}}]
  \label{Gromov's Theorem}
  There exists a constant $E=E(k,\delta)>0$ such that for every $k$ elements $g_1, \dots, g_k$ of infinite order in a $\delta$-hyperbolic group $G$, the subgroup $\langle g_1^{e_1}, \dots, g_k^{e_k} \rangle$ is free whenever $e_i \geq E$ for $i=1, \dots, k$.
\end{thm}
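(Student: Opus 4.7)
The natural approach is a ping-pong argument on the Gromov boundary $\partial G$. Every infinite-order element $g$ in a $\delta$-hyperbolic group acts on $\Cay_X(G)$ as a loxodromic isometry: it has two distinct fixed points $g^{\pm} \in \partial G$ (an attractor and a repeller) and a quasi-invariant quasi-axis along which $g$ translates. A key quantitative input is that the stable translation length $\tau(g) = \lim_{n \to \infty} |g^n|/n$ is bounded below by a positive constant depending only on $\delta$, so a power $g^e$ translates points along its quasi-axis by at least $e \cdot c(\delta)$.

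First, I would group the $g_i$ by the equivalence relation $g_i \sim g_j$ iff $\{g_i^+, g_i^-\} = \{g_j^+, g_j^-\}$, which in a hyperbolic group is equivalent to $\langle g_i, g_j \rangle$ being virtually cyclic. Elements of a common equivalence class lie in a single maximal elementary subgroup $E(g_i)$, which is finite-by-(infinite cyclic); for sufficiently large exponents the $g_i^{e_i}$ of one class all land in a common infinite cyclic subgroup, so they contribute only one free factor.

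Second, pick one representative $h_1, \ldots, h_r$ per equivalence class; these have pairwise disjoint fixed-point pairs on $\partial G$. Choose small pairwise disjoint open neighborhoods $U_j^{\pm}$ of $h_j^{\pm}$. North-south dynamics gives that for sufficiently large $e$, the iterate $h_j^{e}$ sends $\partial G \setminus U_j^-$ into $U_j^+$ and $h_j^{-e}$ sends $\partial G \setminus U_j^+$ into $U_j^-$. The classical ping-pong lemma then yields that $\langle h_1^{e}, \ldots, h_r^{e} \rangle$ is free of rank $r$, and combined with the first step we conclude that $\langle g_1^{e_1}, \ldots, g_k^{e_k} \rangle$ is free.

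The main obstacle is securing the \emph{uniformity} of the bound $E(k,\delta)$ in the two variables stated and nothing else: the estimate must be independent of the chosen elements $g_i$ and of the particular hyperbolic group $G$. This rests on two $\delta$-only inputs that must be extracted from the thin-triangles condition, namely a uniform positive lower bound for stable translation lengths of loxodromic elements and a uniform rate at which iterates of a loxodromic isometry contract complements of repelling neighborhoods toward the attracting fixed point. Tracking these constants carefully through the ping-pong construction, particularly to ensure that a single exponent works simultaneously for all $k$ chosen powers, is the technical heart of the argument.
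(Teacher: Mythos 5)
This statement is quoted from Gromov and the paper offers no proof of it at all, so there is nothing internal to compare your sketch against; it has to stand on its own. The boundary ping-pong strategy you outline is indeed the standard route to results of this type, but as written it has gaps that are more than bookkeeping. First, your treatment of a single commensurability class is wrong in the presence of torsion: it is not true that high powers of all infinite-order elements of a maximal elementary subgroup land in a common infinite cyclic subgroup. Take $G=\mathbb{Z}\times\mathbb{Z}/2\mathbb{Z}$ (virtually cyclic, hence hyperbolic), $g_1=(1,\bar 0)$, $g_2=(1,\bar 1)$: for odd $e_1=e_2=e$ the subgroup $\langle g_1^{e},g_2^{e}\rangle$ contains the involution $(0,\bar 1)$ and is not free. (This in fact shows the theorem as quoted needs either torsion-freeness or a pairwise-independence hypothesis; your argument is where that failure would have to surface, and it surfaces exactly at the claim you assert without proof.) Second, even in the torsion-free case your final ``combine'' step conceals a real problem: if $g_1=h^2$ and $g_2=h^3$ lie in one class, then $\langle g_1^{5},g_2^{7}\rangle=\langle h^{10},h^{21}\rangle=\langle h\rangle$, so the class contributes the \emph{full} maximal cyclic group $\langle h\rangle$, not a large power of $h$. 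The resulting group $\langle h, f^{e}\rangle$ is generated by one element of possibly tiny translation length, and the ping-pong you set up (which only controls large powers of the representatives) does not apply to it. A correct argument must either run the free-product ping-pong with the whole cyclic group $\langle h\rangle$ as one factor or restate the theorem with a single common exponent, and neither is addressed.

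Third, the uniformity you defer to ``tracking constants'' cannot be recovered from the two $\delta$-only inputs you name. The lower bound on stable translation length is Delzant's bounded-denominator theorem and already depends on the generating set, not on $\delta$ alone; more seriously, the required size of the neighborhoods $U_j^{\pm}$ is governed by the Gromov products between the $2r$ fixed points of the chosen elements, and these are not bounded in terms of $\delta$ and $k$ (in $F(a,b)$ the axes of $a$ and $a^{M}b$ fellow-travel for length about $M$, with $M$ arbitrary). So the exponent threshold produced by your ping-pong genuinely depends on the elements $g_1,\dots,g_k$, and obtaining a constant $E(k,\delta)$ independent of them is the actual content of Gromov's assertion, not a routine consequence of north--south dynamics. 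None of this damages the paper, which only invokes the theorem for a fixed torsion-free group and fixed elements, where the existence of \emph{some} threshold suffices; but as a proof of the statement as quoted, the proposal is incomplete.
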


We obtain the following immediate corollary.

\begin{cor}
  \label{Exists a noncommuting element}
  Let $G$ be a non-elementary torsion-free hyperbolic group.
  Let $M = \{g_1, \dots, g_k\}$ be a finite subset of nonidentity elements of $G$.
  Then there exists an element $u \in G$ which is not a proper power and which commutes with no element of $M$.
\end{cor}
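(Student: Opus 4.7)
The plan is to recast the conclusion in terms of maximal cyclic subgroups of $G$ and then produce $u$ by an elementary covering argument. The one technical input I would rely on is the standard fact, itself a consequence of Theorem \ref{Gromov's Theorem}, that in a torsion-free hyperbolic group the centralizer $C_G(g)$ of any nontrivial element $g$ is infinite cyclic, and hence is the unique maximal infinite cyclic subgroup containing $g$. An immediate consequence is that distinct maximal cyclic subgroups of $G$ meet trivially, since any common nontrivial element $x$ would force both of them to equal $C_G(x)$.

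Setting $C_i := C_G(g_i)$, I note that an element of $G$ commutes with $g_i$ if and only if it lies in $C_i$, so the task reduces to exhibiting $u \in G \setminus \bigcup_{i=1}^k C_i$ that is not a proper power. Because $G$ is non-elementary, no cyclic subgroup has finite index, so B.\,H.\ Neumann's lemma on coverings by cosets rules out $G = C_1 \cup \cdots \cup C_k$. I would then pick any $h$ outside this union and let $u$ be a generator of $C_G(h)$.

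It remains to verify the two required properties of $u$. If $u \in C_i$ for some $i$, then $u$ is a common nontrivial element of the two maximal cyclic subgroups $C_i$ and $C_G(u) = \langle u \rangle = C_G(h)$, so $C_i = C_G(h)$ and hence $h \in C_i$, contradicting the choice of $h$. For the primitivity, any $v \in G$ with $v^m = u$ and $m \geq 2$ must lie in $C_G(u) = \langle u \rangle$, so $v = u^j$ for some $j$, forcing $jm = 1$, which is impossible.

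The sole real obstacle is the appeal to cyclic centralizers in torsion-free hyperbolic groups; once that black box is granted, the remainder is entirely elementary, which is presumably why this is advertised as an immediate corollary of Theorem \ref{Gromov's Theorem}.
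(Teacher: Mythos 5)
Your argument is correct, but it reaches the conclusion by a genuinely different route than the paper. The paper invokes Theorem \ref{Gromov's Theorem} directly: after normalizing $M$ so that no $g_i$ is a proper power, it treats $k=1$ separately (using only that $G$ properly contains the maximal cyclic subgroup $C_G(g_1)$) and for $k\geq 2$ passes to the subgroup $H=\langle g_1^E,\dots,g_k^E\rangle$, which Gromov's theorem makes free, and extracts $u$ from $H$; it then remarks that $u$ can be replaced by a root to arrange that it is not a proper power. You instead reduce everything to the single statement $G \neq C_1 \cup \cdots \cup C_k$ and dispatch it with B.~H.~Neumann's lemma on coverings by cosets, using that each $C_i$ is infinite cyclic and hence of infinite index in the non-elementary group $G$. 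Both proofs lean on the same black box --- cyclic centralizers in torsion-free hyperbolic groups, which the paper cites from Ghys--de la Harpe rather than deducing from Theorem \ref{Gromov's Theorem}, so your attribution of that fact is slightly off even though the fact itself is exactly what the paper uses. Your route buys a uniform argument with no case split on $k$, sidesteps the delicate point of whether $\langle g_1^E,\dots,g_k^E\rangle$ is genuinely non-Abelian (it need not be when the $g_i$ pairwise commute, e.g.\ $g_2=g_1^{-1}$), and produces $u$ already generating its own centralizer, so the ``not a proper power'' clause comes for free from $C_G(u)=\langle u\rangle$; the cost is importing Neumann's covering lemma, whereas the paper stays entirely within the hyperbolic-group toolkit it has already set up.
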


\begin{proof}
  Let $C(g) = C_G(g)$ denote the centralizer of $g$ in $G$.
  It is well-known that centralizers of nontrivial elements of a torsion-free hyperbolic group are necessarily infinite cyclic \cite{Ghys1990}.
  Consequently, if $g \in G$ is nontrivial and not a proper power, then $C(g) = C(g^n) = \langle g \rangle$ for all $n \neq 0$ and $C(g)$ is maximal among the cyclic subgroups of $G$.
  Thus we may replace $g_i^n \in M$ with $g_i$ without changing the set of elements which commute with no element of $M$, and we may assume that no element of $M$ is a proper power.
  The same argument shows that we may always produce $u$ which is not a proper power by passing to a root if necessary.

  Suppose $k=1$.
  As $G$ is non-elementary, it must properly contain the maximal cyclic subgroup $C(g_1)$, and so there must exist an element $u$ of $G$ failing to commute with $g_1$.

  Now suppose $k \geq 2$.
  Let $E=E(k,\delta)$ be the constant from Theorem \ref{Gromov's Theorem}; we may assume $E$ to be a positive integer.
  Since $G$ is torsion-free, $g_i^E$ is nontrivial and of infinite order for all $g_i \in M$.
  Therefore $H=\langle g_1^E, \dots, g_k^E \rangle$ is a non-Abelian free subgroup of $G$ and so contains an element $u \in H \subseteq G$ which fails to commute with any $g_i^E$.
  Since $C(g_i^E)=C(g_i)$ in $G$, we have that $u$ cannot commute with any $g_i$.
\end{proof}

In order to show that a non-elementary torsion-free hyperbolic group satisfies Property (*), we resort to a version of the big powers property of torsion-free hyperbolic groups.
The form we use here follows immediately from the stronger version for a class of relatively hyperbolic groups proved in \cite{Kharlampovich2009}.

\begin{thm}[The big powers property \cite{Kharlampovich2009}]
  Let $G$ be a torsion-free hyperbolic group.
  Let $u \in G$ be nontrivial and not a proper power.
  Let $g_1, \dots, g_k$ be elements of $G$ which do not commute with $u$.
  Then there exists $N > 0$ such that if $|n_i| \geq N$ for $i=0, \dots, k$ then
  \begin{align*}
    u^{n_0} g_1 u^{n_1} g_2 \cdots u^{n_{k-1}} g_k u^{n_k} \neq 1.
  \end{align*}
\end{thm}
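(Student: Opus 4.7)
The plan is to reduce the problem to a statement about an alternating product inside a free subgroup of $G$, at which point Gromov's Theorem (Theorem \ref{Gromov's Theorem}) can be brought to bear. The key tool is a conjugation trick that absorbs the intermediate $g_i$'s into conjugates of $u$.

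First, I would set $w_i = g_1 g_2 \cdots g_i$ (with $w_0 = 1$) and $u_i = w_i u w_i^{-1}$. A direct rearrangement, proceeding left-to-right through the word, yields
\begin{align*}
  u^{n_0} g_1 u^{n_1} g_2 \cdots g_k u^{n_k} = u_0^{n_0} u_1^{n_1} \cdots u_k^{n_k} \cdot w_k.
\end{align*}
Thus the original word is trivial if and only if the alternating product $u_0^{n_0} u_1^{n_1} \cdots u_k^{n_k}$ coincides with the fixed element $w_k^{-1}$, which does not depend on the $n_i$.

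Next, I would verify that $u_0, u_1, \ldots, u_k$ are pairwise non-commensurable. Mirroring the argument in Corollary \ref{Exists a noncommuting element}, the hypothesis that $u$ is not a proper power implies $\langle u \rangle$ is maximal cyclic and, in a torsion-free hyperbolic group, self-normalizing. If two conjugates $u_i, u_j$ with $i < j$ shared a non-trivial common power, then $w_i^{-1} w_j = g_{i+1} \cdots g_j$ would normalize $\langle u \rangle$ and hence commute with $u$; an induction on $j-i$, using the hypothesis that no individual $g_\ell$ commutes with $u$, rules this out.

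With the $u_i$ of infinite order and pairwise non-commensurable, Theorem \ref{Gromov's Theorem} supplies a constant $E$ such that $H = \langle u_0^E, \ldots, u_k^E \rangle$ is free, with the stated high powers forming a free basis. I would then choose $N$ large enough so that, whenever $|n_i| \geq N$, the alternating product $u_0^{n_0} \cdots u_k^{n_k}$ has uniformly growing syllable length in an appropriate normal form, eventually outstripping any fixed bound imposed by the target element $w_k^{-1}$ and forcing the desired non-triviality.

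The main obstacle is this last step: the exponents $n_i$ are not required to be multiples of $E$, so the product $u_0^{n_0} \cdots u_k^{n_k}$ need not lie inside $H$, and one must carefully track the bounded-length remainder factors $u_i^{n_i \bmod E}$. This bookkeeping is cleanly handled via the axis- and quasi-geodesic-stability arguments of \cite{Kharlampovich2009}, whose stronger relatively hyperbolic version immediately specializes to the statement needed here, which is why I would ultimately prefer to import the theorem as stated rather than reprove it from scratch.
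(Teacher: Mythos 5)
The paper offers no proof of this statement: it is imported directly from \cite{Kharlampovich2009}, with the remark that the form used here follows immediately from the stronger relatively hyperbolic version proved there. Your closing sentence --- that you would ultimately prefer to import the theorem rather than reprove it --- is therefore exactly what the paper does, and to that extent your write-up is consistent with the source. Judged as a proof sketch, however, it has a genuine gap beyond the bookkeeping you already flag at the end.

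The telescoping identity $u^{n_0}g_1u^{n_1}\cdots g_ku^{n_k}=u_0^{n_0}u_1^{n_1}\cdots u_k^{n_k}\,w_k$ with $u_i=w_iuw_i^{-1}$ is correct, and it is true that $u_i$ and $u_{i+1}$ are non-commensurable precisely because $g_{i+1}\notin C_G(u)$ (using that $\langle u\rangle$ is maximal cyclic, hence self-normalizing and malnormal in a torsion-free hyperbolic group). But your claim that the $u_i$ are \emph{pairwise} non-commensurable is false, and the proposed induction cannot repair it: the hypotheses forbid each individual $g_\ell$ from commuting with $u$, not a product of several of them. Take $g_2=g_1^{-1}$ with $g_1\notin C_G(u)$; both hypotheses of the theorem hold, yet $w_2=1$ and $u_2=u_0=u$. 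Consequently Gromov's theorem does not hand you a free basis $u_0^E,\dots,u_k^E$, and the ``uniformly growing syllable length'' conclusion does not follow as stated. What is actually needed --- nontriviality of an alternating product of large powers in which only \emph{consecutive} factors are non-commensurable, with exponents not divisible by $E$ and a fixed element $w_k$ appended --- is essentially the big powers property itself; that is the part proved in \cite{Kharlampovich2009} (and, for hyperbolic groups, in \cite{Olshanskii1993}) by quasi-geodesic stability arguments, and it is not recovered by the reduction you describe.
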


The big powers property provides a way of generating a large set of nontrivial elements of a group and so is of use in studying residual properties of groups, universal equivalence, and algebraic geometry over groups.
The property appears first due to B. Baumslag in his study of fully residually free groups \cite{Baumslag1967}.
Ol'shanski\u\i\ later generalizes the property to torsion-free hyperbolic groups \cite{Olshanskii1993}, and Kharlampovich and Myasnikov further generalize it to non-Abelian torsion-free relatively hyperbolic groups with free Abelian parabolic subgroups \cite{Kharlampovich2009}.

\section{Main Result}

\begin{prop}
  \label{Property (*) for NETFHG}
  If $G$ is a non-elementary torsion-free hyperbolic group, then $G$ satisfies Property (*).
\end{prop}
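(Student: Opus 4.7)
The plan is to use the big powers property to construct $a, b, c$ as distinct powers of a carefully chosen element $u \in G$. Given a finite set $M$ of nonidentity elements and an integer $m \geq 2$, I would first invoke Corollary \ref{Exists a noncommuting element} to obtain an element $u \in G$ which is not a proper power and which commutes with no element of $M$. For each $m$-tuple $(h_1, \ldots, h_m) \in M^m$, the big powers property supplies a threshold; since $M^m$ is finite, taking the maximum of these thresholds yields a single integer $N > 0$ that works uniformly across all such tuples.

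I would then choose three distinct integers $N_1, N_2, N_3$ whose absolute values and whose pairwise differences are all at least $N$ (for instance, $N_1 = N$, $N_2 = 2N$, $N_3 = 3N$), and set $a = u^{N_1}$, $b = u^{N_2}$, $c = u^{N_3}$. Because $G$ is torsion-free and $u \neq 1$, the element $u$ has infinite order, so $a,b,c$ are genuinely distinct.

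To verify Property (*), I would argue by contrapositive. Suppose that $(x_1^{-1} g_1 x_1)(x_2^{-1} g_2 x_2) \cdots (x_m^{-1} g_m x_m) = 1$ for some $g_i \in M$ and $x_i \in \{a,b,c\}$, and additionally that $x_i \neq x_{i+1}$ for every $i$. Writing each $x_i = u^{n_{x_i}}$ with $n_{x_i} \in \{N_1, N_2, N_3\}$, the product telescopes to
\begin{align*}
  u^{-n_{x_1}} g_1 u^{n_{x_1} - n_{x_2}} g_2 u^{n_{x_2} - n_{x_3}} \cdots u^{n_{x_{m-1}} - n_{x_m}} g_m u^{n_{x_m}}.
\end{align*}
Each intermediate exponent $n_{x_i} - n_{x_{i+1}}$ is a nonzero difference of two of the $N_j$, hence has absolute value at least $N$, and the outer exponents $-n_{x_1}$ and $n_{x_m}$ satisfy the same bound by construction. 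Since no element of $M$ commutes with $u$, the big powers property forces this product to be nontrivial, contradicting the assumption. Hence some $x_i = x_{i+1}$, and Property (*) holds.

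The main obstacle I anticipate is securing a single threshold $N$ that works uniformly for every sequence $(g_1, \ldots, g_m) \in M^m$; this is handled by the finiteness of $M^m$. The other delicate point is arranging the $N_j$ so that every difference appearing in the telescoping product, including the outermost exponents, simultaneously dominates $N$, which is accomplished by the explicit choice $N_j = jN$. Everything else reduces to bookkeeping intrinsic to the conjugation product.
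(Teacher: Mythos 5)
Your proposal is correct and follows essentially the same route as the paper: obtain $u$ from Corollary \ref{Exists a noncommuting element}, take a uniform big-powers threshold $N$ over the finitely many tuples in $M^m$, set $a=u^N$, $b=u^{2N}$, $c=u^{3N}$, and observe that the telescoped exponents are nonzero (hence of absolute value at least $N$) precisely when no two consecutive $x_i$ coincide. The only difference is cosmetic: you phrase the final step as a contrapositive, while the paper argues directly that $w=1$ forces some interior exponent to vanish.
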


\begin{proof}
  Let $M$ be a finite subset of $G$ not containing the identity.
  By Corollary \ref{Exists a noncommuting element}, there exists an element $u \in G$ which generates its own centralizer and commutes with no $g \in M$.

  Fix a positive integer $m \geq 2$ and consider a finite sequence $g_1, \dots, g_m$ of elements from $M$.
  Since $u$ commutes with none of the $g_i$ and $u$ generates its own centralizer, by the big powers property, there exists $N(g_1, \dots, g_m) > 0$ such that
  \begin{align*}
    u^{n_0} g_1 u^{n_1} g_2 \cdots g_{m-1} u^{n_{m-1}} g_m u^{n_m} \neq 1
  \end{align*}
  whenever $|n_i| \geq N$ for all $i=0, \dots, m$.

  Since $M$ is a finite set, there are finitely many $m$-tuples $(g_1, \dots, g_m)$ drawn from $M$.
  Therefore, let $N > \max \left\{N(g_1, \dots, g_m) \mid g_1, \dots, g_m \in M \right\}$.

  We now define $a=u^N, b=u^{2N}$, and $c=u^{3N}$.
  Since $G$ is torsion-free, these elements are necessarily distinct.
  Consider a product
  \begin{align*}
    w = (x_1^{-1}g_1 x_1) (x_2^{-1}g_2 x_2) \cdots (x_m^{-1}g_m x_m)
  \end{align*}
  where $x_1, \dots, x_m \in \{ u^N, u^{2N}, u^{3N}\}$.
  We then have
  \begin{align*}
    w = u^{n_0} g_1 u^{n_1} g_2 \cdots g_{m-1} u^{n_{m-1}} g_m u^{n_m},
  \end{align*}
  where $u^{n_0} = x_1^{-1}, u^{n_m} = x_m, u^{n_i} =x_i x_{i+1}^{-1}$ and $n_i \in \{0, \pm N, \pm 2N\}$ for $i = 1, \dots, m-1$.
  Note that by choice of $x_1$ and $x_m$, we have $n_0 \neq 0$ and $n_m \neq 0$.

  By the big powers property and choice of $N$, if $n_i \neq 0$ for all $i=0, \dots, m$, then $w \neq 1$.
  Therefore, if $w=1$, then some $n_i = 0$.
  Since we cannot have $n_0=0$ or $n_{m+1}=0$, we have $n_i=0$ for some $i \in \{1, \dots, m-1\}$, in which case we must have $1 = u^{n_i} = x_i x_{i+1}^{-1} $, and so $x_i = x_{i+1}$.
\end{proof}

Since a non-elementary hyperbolic group is finitely generated by definition, it is necessarily countably infinite.
However, by \cite[Remark 3.6]{Alexander2017}, a countably infinite group satisfying Property (*) also necessarily contains a free subgroup of rank two, which is also countably infinite.
(We also note that a group $G$ satisfying Property (*) also automatically satisfies $\Delta(G)=1$.)
A non-elementary torsion-free hyperbolic group thus satisfies Property (*) and has a free subgroup of the same cardinality, and so we obtain the following main result as a corollary to Theorem \ref{Alexander-Nishinaka}.

\begin{thm}
  \label{the result}
  If $G$ is a non-elementary torsion-free hyperbolic group, then for any countable domain $R$, the group ring $RG$ of $G$ over $R$ is primitive. In particular, the group ring $KG$ is primitive for any field $K$.
\end{thm}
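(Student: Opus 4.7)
The plan is to derive this theorem as a direct corollary of the Alexander--Nishinaka criterion (Theorem \ref{Alexander-Nishinaka}), using the work already done in Proposition \ref{Property (*) for NETFHG}. Concretely, I need to verify the two structural hypotheses of Theorem \ref{Alexander-Nishinaka} for a non-elementary torsion-free hyperbolic group $G$: first, that $G$ contains a non-Abelian free subgroup of cardinality $|G|$, and second, that $G$ satisfies Property (*). The cardinality bound on $R$ then follows because $R$ is assumed countable and $G$ is countably infinite.

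First, I would observe that any hyperbolic group is by definition finitely generated, hence countable; since $G$ is non-elementary it is in particular infinite, so $|G| = \aleph_0$. Next, Proposition \ref{Property (*) for NETFHG} already furnishes Property (*). To get a free subgroup of the correct cardinality, I can either invoke Corollary \ref{Exists a noncommuting element} and Theorem \ref{Gromov's Theorem} to produce a free subgroup of rank two directly (already done inside the proof of Corollary \ref{Exists a noncommuting element} for $k \geq 2$), or appeal to \cite[Remark 3.6]{Alexander2017}, which guarantees that any countable group with Property (*) contains a free subgroup of rank two. Either way, a rank-two free subgroup has cardinality $\aleph_0 = |G|$.

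Finally, since $R$ is a countable domain, $|R| \leq \aleph_0 = |G|$. All hypotheses of Theorem \ref{Alexander-Nishinaka} are therefore met, and we conclude that $RG$ is primitive. The ``in particular'' clause for fields $K$ follows immediately: either $K$ is countable and we apply the general statement, or for arbitrary $K$ we can invoke the final sentence of Theorem \ref{Alexander-Nishinaka}, which asserts primitivity of $KG$ for any field $K$ once the group-theoretic hypotheses hold.

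There is no real obstacle here; the work has all been front-loaded into establishing Property (*). The only mild care needed is in lining up the cardinality conditions correctly, and in making sure that the reader sees why the free subgroup exists without re-deriving the non-Abelian free subgroup from scratch; citing Remark 3.6 of \cite{Alexander2017} is the cleanest route, since it exploits the same Property (*) that was just verified.
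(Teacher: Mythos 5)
Your proposal is correct and follows essentially the same route as the paper: note that $G$ is countably infinite, invoke Proposition \ref{Property (*) for NETFHG} for Property (*), use \cite[Remark 3.6]{Alexander2017} to get a rank-two (hence countably infinite) free subgroup, and apply Theorem \ref{Alexander-Nishinaka} with $|R| \leq \aleph_0 = |G|$. The handling of the ``in particular'' clause for arbitrary fields via the corresponding clause of Theorem \ref{Alexander-Nishinaka} is also consistent with the paper.
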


We may slightly relax the torsion-free condition on $G$ if $\Delta(G)=1$, as demonstrated by the following corollary.

\begin{cor}
  If $G$ is a non-elementary virtually torsion-free hyperbolic group with $\Delta(G)=1$, then for any countable domain $R$, the group ring $RG$ of $G$ over $R$ is primitive. In particular, the group ring $KG$ is primitive for any field $K$.
\end{cor}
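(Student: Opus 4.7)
The plan is to reduce the corollary to Theorem \ref{the result} by passing to a finite-index torsion-free subgroup, and then to lift primitivity back up to $G$. First, since $G$ is virtually torsion-free, I can choose a torsion-free subgroup $H_0$ of finite index in $G$. Replacing $H_0$ with its normal core $H = \bigcap_{g \in G} g H_0 g^{-1}$, I may assume $H$ is a torsion-free normal subgroup of $G$ with $[G:H] < \infty$.

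Next, I would verify that $H$ is itself a non-elementary torsion-free hyperbolic group. Hyperbolicity passes to finite-index subgroups because the relevant Cayley graphs are quasi-isometric. Non-elementarity is forced as well, since elementary hyperbolic groups (finite or virtually infinite cyclic) have only elementary finite-index subgroups; so if $H$ were elementary, then $G$ would be too, contradicting the hypothesis. Applying Theorem \ref{the result} to $H$ then yields that $RH$ is primitive for every countable domain $R$.

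The final step is to transfer primitivity from $RH$ to $RG$. Since $H$ is normal of finite index in $G$, the group ring $RG$ has the structure of a crossed product of $RH$ by the finite quotient $G/H$. I would invoke a standard result from the theory of group rings under finite normal extensions (see, for example, \cite{Passman1977}) to conclude that primitivity of $RH$ together with the hypothesis $\Delta(G)=1$ implies primitivity of $RG$. The hypothesis $\Delta(G)=1$ is essential here: the weaker condition $\Delta^+(G)=1$ is already necessary for primitivity of any group ring, and the full FC-center condition is what eliminates the finite-dimensional obstructions that a finite extension could otherwise introduce when lifting a faithful irreducible module.

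The main obstacle is this last lifting step. The geometric content from hyperbolic group theory is already packaged inside Theorem \ref{the result}, so the additional work is purely algebraic, but transferring primitivity across a finite normal extension is non-trivial and depends on a careful application of the crossed-product machinery. An alternative, more self-contained route would be to revisit Property (*) directly, choosing an infinite-order element $u$ inside the torsion-free finite-index subgroup $H$ via Corollary \ref{Exists a noncommuting element} applied to $H$. However, this runs into the difficulty that the big powers property as stated above assumes $G$ itself is torsion-free, so the crossed-product lifting route is substantially cleaner.
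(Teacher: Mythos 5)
Your overall architecture is exactly the paper's: pass to a torsion-free finite-index subgroup $H$, observe that $H$ is still non-elementary hyperbolic (by quasi-isometry) and torsion-free, apply Theorem \ref{the result} to get primitivity of the group ring of $H$, and then lift primitivity back to $G$ across the finite-index extension. The gap is in the lifting step, which you correctly identify as the crux but then dispose of by appeal to an unnamed ``standard result'' about crossed products. There is no single off-the-shelf theorem of the form ``$\Delta(G)=1$ plus $KH$ primitive plus $[G:H]<\infty$ implies $KG$ primitive''; the actual argument factors into two precise citations. First, $\Delta(G)=1$ is used to conclude that $KG$ is \emph{prime}, via Passman's characterization that $KG$ is prime if and only if $\Delta(G)$ is torsion-free Abelian (\cite[Theorem 4.2.10]{Passman1977}). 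Second, Rosenberg's theorem (\cite[Theorem 3]{Rosenberg1971}, cf.\ \cite[Theorem 9.1.11]{Passman1977}) states that when $KG$ is prime and $H$ has finite index in $G$, the ring $KG$ is primitive if and only if $KH$ is. Your heuristic that $\Delta(G)=1$ ``eliminates the finite-dimensional obstructions \dots when lifting a faithful irreducible module'' gestures in the right direction but is not the mechanism: the hypothesis enters only through primeness of $KG$, and without naming that intermediate step the proof is incomplete.

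Two smaller points. The passage to the normal core is unnecessary: Rosenberg's theorem does not require $H$ to be normal, so any torsion-free finite-index subgroup suffices, and the crossed-product framing (which does want normality) is not the route the paper takes. Also, be aware that the paper's own proof, like your sketch, is really carried out for group algebras $KG$ over a field; if you intend to assert the statement for an arbitrary countable domain $R$, you need the prime/primitive transfer results in that generality, which deserves at least a remark. Your closing observation---that re-running Property (*) directly in $G$ fails because the big powers property is only available in the torsion-free group $H$---is correct and is precisely why the lifting route is the right one.
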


\begin{proof}
  Let $K$ be any field and let $H$ be a torsion-free subgroup of finite index in $G$.
  As a finite index subgroup, $H$ is quasi-isometric to $G$, and thus $H$ is necessarily also non-elementary and hyperbolic \cite{Gromov1987}.
  By Theorem \ref{the result}, we have that $KH$ is primitive.

  From \cite[Theorem 4.2.10]{Passman1977}, KG is prime if and only if $\Delta(G)$ is torsion-free Abelian.
  Thus if $\Delta(G)=1$, we have that $KG$ is necessarily prime.

  Finally, if $KG$ is prime, then $KG$ is primitive if and only if $KH$ is primitive by a result of Rosenberg \cite[Theorem 3]{Rosenberg1971} (cf. \cite[Theorem 9.1.11]{Passman1977}.)
\end{proof}

It is worth noting the long-standing open conjecture that every hyperbolic group is virtually torsion-free.
An affirmation of this conjecture, together with the above corollary, would imply that $KG$ is primitive for any field $K$ and any non-elementary hyperbolic group $G$.

\section*{Acknowlegements}

The author would like to thank Prof. Tsunekazu Nishinaka for introducing the author to the subject of primitive group rings and for many illuminating conversations, Prof. Hisaya Tsutsui for his support and guidance, and the referee for his or her very helpful feedback.

\section*{References}

\bibliographystyle{plain}
\bibliography{prim}

\end{document}